\newtheorem{theorem}{Theorem}
\newtheorem{lemma}[theorem]{Lemma}
\newtheorem{remark}[theorem]{Remark}
\def\neweq#1{\begin{equation}\label{#1}}
\def\endeq{\end{equation}}
\begin{document}

\title{Variational formulation of the Melan equation}

\author{Filippo GAZZOLA \ -- \ Yongda WANG \ -- \ Raffaella PAVANI\\
{\small Dipartimento di Matematica, Politecnico di Milano (Italy)}
}

\date{}
\maketitle
\begin{abstract}
The Melan beam equation modeling suspension bridges is considered. A slightly modified equation is derived by applying variational principles and by
minimising the total energy of the bridge. The equation is nonlinear and nonlocal, while the beam is hinged at the endpoints. We show that the problem
always admits at least one solution whereas the uniqueness remains open although some numerical results suggest that it should hold. We also emphasize the
qualitative difference with some simplified models.\par
\textbf{Keywords:} Melan equation, suspension bridges, nonlinear nonlocal terms.\par
\textbf{Mathematics Subject Classification:} 34B15, 74B20.
\end{abstract}

\section{Introduction}

At the end of the 19th century, Josef Melan \cite{melan} suggested the following fourth order ordinary differential equation
to describe the behavior of suspension bridges:
\begin{equation}\label{melan}
EI w''''(x)-(H+h(w))w''(x)-h(w)y''(x)=p\qquad \forall x\in (0,L),
\end{equation}
where $L$ is the distance between the two towers, $w=w(x)$ denotes the vertical displacement of the beam representing the deck, $y=y(x)$ is the position
of the sustaining cable at rest, $E$ and $I$ are, respectively, the elastic modulus of the material composing the deck and the moment of inertia of the
cross section so that $EI$ is the flexural rigidity, $H$ is the horizontal tension of the cable when subject to the dead load $q=q(x)$, $h(w)$ represents the
additional tension in the cable produced by the live load $p=p(x)$. The dead load $q$ includes the weights of the cable, of the hangers, and of the deck.
In the book by von K\'{a}rm\'{a}n-Biot \cite[(5.5)]{karbio}, \eqref{melan} is called the {\em fundamental equation of the theory of the suspension bridge}. In Figure \ref{bridge} we sketch a picture of a suspension bridge.

From a mathematical point of view, the additional tension $h(w)$ in \eqref{melan} deserves a particular attention since it is nonlocal and it introduces
a nonlinearity into the equation. As we shall see, the computation of $h(w)$ is delicate and, in literature, there are several different ways to approximate
it, see \cite{gjs,karbio,ty}. For both the original term $h(w)$ and these approximate forms of it, one can show that there exists at least one solution
of the Melan equation \eqref{melan} with hinged boundary conditions, see \cite[Section 5]{gjs}.
The Melan equation \eqref{melan} is also quite challenging for numerical analysts, see
\cite{dl,gp,los,se,se1,se2,woll} where several approximating procedures for the solution of \eqref{melan} have been discussed for different forms of the term $h(w)$.

The purpose of the present paper is to derive the Melan equation from a variational principle and to study its behavior. We prove that the Euler-Lagrange
equation, see \eqref{melanp}, admits at least one solution and we prove uniqueness for certain values of the parameters. We also discuss uniqueness
for the remaining values of the parameters and we give some numerical results which show how delicate and unstable the equation is. Finally,
we emphasize the role of the nonlinearity and a qualitative difference between the solution of the variational problem with the solution of a simplified problem.

\section{How to derive the Melan equation from a variational principle}

Following von K\'arm\'an-Biot \cite[Section VII.5]{karbio}, we view the main span of a suspension bridge as a combined system of a perfectly flexible string (the
sustaining cable) and a beam (the deck). The beam and the string are connected by a large number of inextensible hangers, see Figure \ref{bridge}.
\begin{figure}
\centering
\includegraphics[height=30mm, width=100mm]{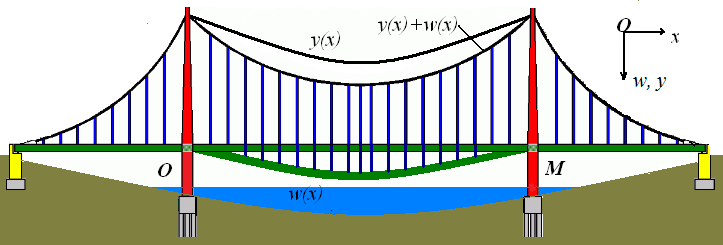}
\caption{Beam sustained by a cable through parallel hangers.\label{bridge}}
\end{figure}
The point $O$ is the origin of the orthogonal coordinate system and positive displacements are oriented downwards. The point $M$ has coordinates
$(0,L)$ with $L$ being the length of the deck between the two towers.

If no live loads act on the beam, there is no bending moment in the beam and the cable is in the position $y(x)$, while the unloaded beam is in the horizontal
position of the segment connecting $O$ and $M$. In this situation, the horizontal component $H>0$ of the tension of the cable is constant. Hence, there is an
equilibrium position in the system and the configuration of the cable is obtained by solving the equation (see \cite[(1.3), Section VII]{karbio})
\begin{equation}\label{cableequ}
H y''(x)=-q\qquad \forall x\in (0,L).
\end{equation}
Since the endpoints of the cable are at the same level $\ell$ (the height of the towers) and since the dead load $q$ is constant, the solution of \eqref{cableequ} is
\[y(x)=\ell+\frac{q}{2H}x(L-x)\qquad\forall x\in (0,L):\]
hence, the cable has the shape of a parabola and
\begin{equation}\label{yprime}
y'(x)=\frac{q}{H}\left(\frac{L}{2}-x\right),\quad y''(x)=-\frac{q}{H}\qquad \forall x\in (0,L).
\end{equation}
Therefore, the length of the cable at rest is
\begin{equation}\label{lc}
L_c=\int_0^L{\sqrt{1+y'(x)^2}}dx=\frac{L}{2}\sqrt{1+\frac{q^2L^2}{4H^2}}+\frac{H}{q}\log\left(\frac{q L}{2H}+\sqrt{1+\frac{q^2L^2}{4H^2}}\right).
\end{equation}

When a live load $p$ acts on the deck of the bridge, the beam may leave the horizontal position and produce a displacement $w$
(positive displacements are oriented downwards). By assuming that $w$, $w'$, $w''$ are small, let us compute the energies involved when the
system is in this new position.

The potential energy produced by the live and dead loads reads
\[\mathcal{E}_L=\int_0^L{(p+q)w}dx.\]

The elastic energy $\mathcal{E}_{B}$ necessary to bend the beam is the squared curvature times half the flexural rigidity, that is,
\[
\mathcal{E}_{B}=\frac{EI}{2}\int_0^L{\frac{(w'')^2}{(1+(w')^2)^3}\sqrt{1+(w')^2}}dx=\frac{EI}{2}\int_0^L{\frac{(w'')^2}{(1+(w')^2)^{5/2}}}dx\approx \frac{EI}{2}\int_0^L{(w'')^2}dx,
\]
where we used the fact that $w'$ and $w''$ are both small and we neglect the terms $o(w')^2$ of superquadratic order.

Since the cable is assumed to be perfectly flexible, it has no resistance to bending. Then the only internal force is its tension which consists of two
parts: the tension at rest $H(x)$ and the additional tension $h(w)$ due to the variation of length of the cable. The former is
$H(x)=H\sqrt{1+y'(x)^2}$ and the amount of energy needed to deform the cable at rest under the tension $H(x)$ in the infinitesimal interval $[x,x+dx]$
from the original position $y(x)$ to the new position $y(x)+w(x)$ is the variation of length times the tension, that is,
\begin{align*}
\mathcal{E}_{C_1}dx=H\sqrt{1+(y')^2}\left(\sqrt{1+(w'+y')^2}-\sqrt{1+(y')^2}\right)dx.
\end{align*}
Then, the energy necessary to deform the whole cable at rest under the tension $H(x)$ is
\begin{align*}
\mathcal{E}_{C_1}&=H \int_0^L{\sqrt{1+(y')^2}\left(\sqrt{1+(w'+y')^2}-\sqrt{1+(y')^2}\right)}dx\\
&=H \int_0^L{\left[1+(y')^2\right]\left(\sqrt{1+\frac{2w'y'}{1+(y')^2}+\frac{(w')^2}{1+(y')^2}}-1\right)}dx.
\end{align*}
In order to maintain only the at most quadratic terms, we use the asymptotic expansion $\sqrt{1+\varepsilon}\approx 1+\frac{\varepsilon}{2}-\frac{\varepsilon^2}{8}$ as $\varepsilon \to 0$; with an integration by parts we then get
\begin{align}\label{ec1}
\mathcal{E}_{C_1}&\approx H\int_0^L{\left[1+(y')^2\right]\left(\frac{w'y'}{1+(y')^2}+\frac{(w')^2}{2(1+(y')^2)^2}\right)}dx=\frac{H}{2} \int_0^L{\frac{(w')^2}{1+(y')^2}}dx+H\int_0^L{w'y'}dx\nonumber\\
&= \frac{H}{2} \int_0^L{\frac{(w')^2}{1+(y')^2}}dx+q\int_0^L{w}dx.
\end{align}

For the additional tension $h(w)$, we note that the hangers connecting the cable and the beam are inextensible. Therefore, the deflection of the cable follows the displacement $w$ of the deck, that is, the cable reaches the new position $y+w$ and the variation of the cable length due to the deformation $w$ is given by (see \eqref{yprime}-\eqref{lc})
\begin{equation}\label{gamma}
\Gamma(w)=\int_0^L{\sqrt{1+\left(w'(x)+y'(x)\right)^2}}dx-L_c.
\end{equation}
If $A$ denotes the cross-sectional area of the cable and $E_c$ is the modulus of elasticity, then the additional tension $h(w)$ in the cable produced by the live load $p$ and the corresponding energy $\mathcal{E}_{C_2}(w)$ are given by
\begin{equation}\label{addten}
h(w)=\frac{E_cA}{L_c}\Gamma(w),\qquad \mathcal{E}_{C_2}(w)=\frac{E_cA}{2L_c}\Gamma(w)^2.
\end{equation}

Therefore, the total energy necessary to deform the cable is
\[\mathcal{E}_C=\mathcal{E}_{C_1}+\mathcal{E}_{C_2}=\frac{H}{2} \int_0^L{\frac{(w')^2}{1+(y')^2}}dx+q\int_0^L{w}dx+\frac{E_cA}{2L_c}\Gamma(w)^2.\]

\begin{remark}\label{re}
When computing the energy $\mathcal{E}_{C_1}$, Timoshenko-Young obtain
\[\mathcal{E}_{C_1}=\frac{H}{2} \int_0^L{(w')^2}dx+q\int_0^L{w}dx,\]
see \cite[Section 11.16]{ty}. This formula should be compared with \eqref{ec1}: it is obtained by approximating $y'\approx 0$. As explained in \cite{gjs} this may generate some significant errors in the solutions. It was the civil and structural German engineer Franz Dischinger who discovered around 1950 the dramatic consequences of
this approximation on the structures.
\end{remark}

Summarizing, the total energy in the system after the deformation $w$ is
\begin{align*}
\mathcal{E}=\mathcal{E}_B+\mathcal{E}_{C}-\mathcal{E}_L=
\frac{EI}{2}\int_0^L{(w'')^2}dx+\frac{H}{2} \int_0^L{\frac{(w')^2}{1+(y')^2}}dx+\frac{E_cA}{2L_c}\Gamma(w)^2-\int_0^L{pw}dx.
\end{align*}
The Euler-Lagrange equation of the system is obtained by taking the critical points of the energy $\mathcal{E}$. Then by recalling \eqref{yprime} and that the beam is hinged at its endpoints, we obtain the following boundary value problem
\begin{equation}\label{melanp}
\begin{cases}
EI w''''(x)-H \left(\frac{w'(x)}{1+(y'(x))^2}\right)'-\frac{E_c A}{L_c}\frac{w''(x)-q/H}{(1+(w'(x)+y'(x))^2)^{3/2}}\Gamma(w)=p\quad & x\in (0,L)\\
w(0)=w(L)=w''(0)=w''(L)=0.\quad &
\end{cases}
\end{equation}

One should compare \eqref{melanp} with the classical Melan equation \eqref{melan}. For the history and the details on the derivation of the Melan equation we also refer to the recent monograph \cite{ga}.

\section{Main result}

For simplicity, we put $a=EI$, $b=H$, and $c=\frac{E_cA}{L_c}$. Then the problem \eqref{melanp} reads
\begin{equation}\label{melanp1}
\begin{cases}
a w''''(x)- b\left(\frac{w'(x)}{1+(y'(x))^2}\right)'-c\frac{w''(x)-q/H}{(1+(w'(x)+y'(x))^2)^{3/2}}\Gamma(w)=p\quad & x\in (0,L)\\
w(0)=w(L)=w''(0)=w''(L)=0,\quad &
\end{cases}
\end{equation}
where $a, b, c>0$ and the functional $\Gamma(w)$ is as in \eqref{gamma}, it is nonlinear nonlocal and of indefinite sign. Define $\alpha, \beta>0$ by
\begin{equation}\label{ab}
\alpha^2:=\left[1+\frac{q^2L^2}{12H^2}\right]\frac{L}{H},\qquad \beta^2:=\left[1+\frac{q^2L^2}{4H^2}\right]\frac{1}{H}.
\end{equation}
Given $k\in [1, \infty]$, we denote the $L^k$-norm by $\|u\|_k$ for any $u \in L^k(0,L)$.
We also introduce the following scalar product on the second order Sobolev space $H^2\cap H_0^1(0,L)$:
\begin{equation}\label{newsc}
(u,v)_{y}:=a\int_0^L{u''v''}dx+b\int_0^L{\frac{u'v'}{1+(y')^2}}dx\quad \mbox{ for any }u,v\in H^2\cap H_0^1(0,L),
\end{equation}
where the function $y'$ is as in \eqref{yprime}. Let $\mathcal{H}$ be the dual space of $H^2\cap H_0^1(0,L)$; we denote by $\|\cdot\|_{\mathcal{H}}$ the $\mathcal{H}$-norm and by $\langle\cdot,\cdot\rangle$ the corresponding duality between $H^2\cap H_0^1(0,L)$ and $\mathcal{H}$.

If $p\in \mathcal{H}$, we say that $w\in H^2\cap H_0^1(0,L)$ is a weak solution of \eqref{melanp1} if
\begin{equation}\label{define}
(w,v)_{y}+c\Gamma(w)\int_0^L{\frac{(w'+y')v'}{\sqrt{1+(w'+y')^2}}}dx=\langle p,v\rangle\qquad \mbox{for all } v \in H^2\cap H_0^1(0,L).
\end{equation}
Then we prove

\begin{theorem}\label{thm}
For any $p\in \mathcal{H}$, there exists at least one weak solution of the problem \eqref{melanp1}. Moreover, assume that
\begin{equation}\label{cc}
0<c<\frac{1}{\alpha^{2}}.
\end{equation}
Then for all $p\in \mathcal{H}$ satisfying
\begin{equation}\label{pc}
\|p\|_{\mathcal{H}}<\frac{(1-c\alpha^2)^2}{c\alpha\beta^2},
\end{equation}
the problem \eqref{melanp1} admits a unique weak solution $w\in H^2\cap H_0^1(0,L)$.
\end{theorem}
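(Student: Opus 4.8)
The plan is to obtain existence by the direct method of the calculus of variations and uniqueness by a testing/absorption argument resting on two elementary bounds for the norm $\|\cdot\|_y$ induced by \eqref{newsc}. For existence I would work with the energy whose critical points are exactly the weak solutions \eqref{define}, namely
\[
\mathcal{E}(w)=\tfrac12\,(w,w)_y+\tfrac{c}{2}\,\Gamma(w)^2-\langle p,w\rangle,\qquad w\in H^2\cap H_0^1(0,L),
\]
and set $\|w\|_y:=(w,w)_y^{1/2}$, an equivalent norm on $H^2\cap H_0^1(0,L)$. Since $\tfrac{c}{2}\Gamma(w)^2\ge0$ and $|\langle p,w\rangle|\le\|p\|_{\mathcal H}\|w\|_y$, one has $\mathcal{E}(w)\ge\tfrac12\|w\|_y^2-\|p\|_{\mathcal H}\|w\|_y$, so $\mathcal{E}$ is coercive. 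For weak lower semicontinuity, the quadratic form is convex and continuous and the linear term is weakly continuous, while the nonlocal term is in fact weakly \emph{continuous}: if $w_n\rightharpoonup w$ then $w_n'\to w'$ in $L^1(0,L)$ by compact embedding, and since $s\mapsto\sqrt{1+s^2}$ is $1$-Lipschitz, $|\Gamma(w_n)-\Gamma(w)|\le\|w_n'-w'\|_1\to0$, whence $\Gamma(w_n)^2\to\Gamma(w)^2$. A minimizing sequence thus has a weakly convergent subsequence whose limit minimizes $\mathcal{E}$, and $\frac{d}{dt}\mathcal{E}(w+tv)|_{t=0}=0$ recovers \eqref{define}; this settles the first assertion for every $p\in\mathcal H$.

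The two estimates driving uniqueness are $\|w'\|_1\le\alpha\|w\|_y$ and $\|w'\|_2\le\beta\|w\|_y$, both read off from \eqref{ab}. A weighted Cauchy--Schwarz inequality gives $\|w'\|_1\le\big(\int_0^L\frac{(w')^2}{1+(y')^2}dx\big)^{1/2}\big(\int_0^L(1+(y')^2)dx\big)^{1/2}$ with $\int_0^L(1+(y')^2)dx=H\alpha^2$, while $\|w'\|_2^2\le\|1+(y')^2\|_\infty\int_0^L\frac{(w')^2}{1+(y')^2}dx$ with $\|1+(y')^2\|_\infty=H\beta^2$; in both cases $b\int_0^L\frac{(w')^2}{1+(y')^2}dx\le\|w\|_y^2$ closes the estimate. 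Using these together with $\Gamma(0)=0$ and $|\tfrac{s}{\sqrt{1+s^2}}|<1$, I would test \eqref{define} with $v=w$ for an arbitrary solution $w$ to get $\big|c\,\Gamma(w)\int_0^L\frac{(w'+y')w'}{\sqrt{1+(w'+y')^2}}\,dx\big|\le c\alpha^2\|w\|_y^2$, hence $(1-c\alpha^2)\|w\|_y^2\le\|p\|_{\mathcal H}\|w\|_y$. Under \eqref{cc} this is the crucial \emph{a priori bound} $\|w\|_y\le\|p\|_{\mathcal H}/(1-c\alpha^2)$, valid for \emph{every} weak solution, not merely the minimizer.

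For uniqueness, let $w_1,w_2$ be solutions and $w:=w_1-w_2$; subtracting the identities \eqref{define} and testing with $w$ gives $\|w\|_y^2=-c\,[\Gamma(w_1)\Phi(w_1,w)-\Gamma(w_2)\Phi(w_2,w)]$, where $\Phi(u,v):=\int_0^L\frac{(u'+y')v'}{\sqrt{1+(u'+y')^2}}dx$. I would split this as $\Gamma(w_1)[\Phi(w_1,w)-\Phi(w_2,w)]+[\Gamma(w_1)-\Gamma(w_2)]\Phi(w_2,w)$ and bound each factor by the mean value theorem: since $\frac{d}{ds}\frac{s}{\sqrt{1+s^2}}=(1+s^2)^{-3/2}\in(0,1]$, one has $|\Phi(w_1,w)-\Phi(w_2,w)|\le\|w'\|_2^2\le\beta^2\|w\|_y^2$; since $s\mapsto\sqrt{1+s^2}$ is $1$-Lipschitz and $|\tfrac{s}{\sqrt{1+s^2}}|<1$, one has $|\Gamma(w_1)-\Gamma(w_2)|\le\|w'\|_1\le\alpha\|w\|_y$ and $|\Phi(w_2,w)|\le\|w'\|_1\le\alpha\|w\|_y$; and the a priori bound gives $|\Gamma(w_1)|\le\alpha\|w_1\|_y\le\alpha\|p\|_{\mathcal H}/(1-c\alpha^2)$. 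Collecting these yields $\|w\|_y^2\le c\big(\frac{\alpha\beta^2\|p\|_{\mathcal H}}{1-c\alpha^2}+\alpha^2\big)\|w\|_y^2$, and the factor on the right is $<1$ precisely when $c\alpha\beta^2\|p\|_{\mathcal H}<(1-c\alpha^2)^2$, i.e.\ exactly condition \eqref{pc}; therefore $w=0$.

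The existence part is routine. The main obstacle is the uniqueness estimate: because $\Gamma(w)$ is nonlocal and of indefinite sign, the associated operator need not be monotone, so one cannot conclude by convexity or standard monotonicity. The crux is therefore twofold, namely (i) securing an a priori bound that survives for \emph{all} critical points, obtained by absorbing the indefinite nonlocal term via \eqref{cc}, and (ii) organizing the difference estimate so that the resulting contraction threshold coincides with \eqref{pc}; this dictates the precise bookkeeping in which $\alpha$ governs the $L^1$-type contributions and $\beta$ enters only through the $L^2$-bound attached to the Lipschitz term.
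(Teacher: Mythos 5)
Your proposal is correct, and the key inequalities are exactly those of the paper: the embedding constants $\alpha,\beta$ from \eqref{ests}, the Lipschitz bounds $|\Gamma(u)-\Gamma(v)|\le\|u'-v'\|_1$ and \eqref{23}, the a priori bound $\|w\|_y\le\|p\|_{\mathcal H}/(1-c\alpha^2)$ for every weak solution, and the same splitting of the nonlocal difference into a ``$\Gamma$ fixed, kernel varies'' piece (controlled by $\beta^2$) plus a ``$\Gamma$ varies'' piece (controlled by $\alpha^2$). Where you diverge is in the packaging. For existence, the paper proves coercivity together with the Palais--Smale condition (Lemma \ref{ps}) and then minimises; your direct method with weak continuity of $\Gamma^2$ (which is the paper's Lemma \ref{gamma2}) is the leaner route, since (PS) is not needed to produce a global minimiser of a coercive weakly lower semicontinuous functional. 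For uniqueness, the paper introduces the auxiliary linear problem \eqref{linp}, solves it by Lax--Milgram, and shows the resulting solution map is a contraction of the ball $B_p$ into itself, concluding via the Banach fixed point theorem; you instead run the identical difference estimate directly on two weak solutions $w_1,w_2$ and absorb $\|w_1-w_2\|_y^2$ on the left. Your version is more elementary (no auxiliary problem, no fixed-point machinery) and arrives at precisely the same threshold $\rho=c\alpha^2+c\alpha\beta^2\|p\|_{\mathcal H}/(1-c\alpha^2)<1$, i.e.\ condition \eqref{pc}; what the paper's formulation buys in exchange is a self-contained second existence proof inside $B_p$ and an iteration scheme that foreshadows the numerical fixed-point procedure of Section 4. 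One presentational caution: you reuse the symbol $\Phi$ for the bilinear expression $\int_0^L\frac{(u'+y')v'}{\sqrt{1+(u'+y')^2}}\,dx$, whereas the paper reserves $\Phi$ for the solution map, so rename one of them if you merge the arguments.
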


The uniqueness statement holds if both $c>0$ and $\|p\|_{\mathcal{H}}$ are sufficiently small. However, the assumption $c<\alpha^{-2}$ does not hold in
general for actual bridges, see \cite{woll}. Therefore, we now discuss the case $c\geq\alpha^{-2}$.\par
First, we study what happens in the ``limit case'' where $c\to +\infty$: the problem \eqref{melanp1} degenerates to
\begin{equation}\label{degp}
\frac{w''-q/H}{(1+(w'+y')^2)^{3/2}}\Gamma(w)=0\quad x\in (0,L)\ ,\qquad w(0)=w(L)=0.
\end{equation}
Clearly, $w=0$ and $w=-\frac{q}{2H}(L-x)x$ are two solutions of \eqref{degp}. But let us also analyze the functional $\Gamma(w)$. We shift it by
$Y(x):=\frac{q}{2H}x(L-x)$ (so that $Y\in H^2\cap H_0^1(0,L)$ and $Y'=y'$) and,
for all $0\not \equiv w\in H^2\cap H_0^1(0,L)$, we define the real function
$$
\gamma_w(t):=\Gamma(tw-Y)=\int_0^L{\left[\sqrt{1+(tw')^2}-\sqrt{1+(y')^2}\right]}dx\qquad \mbox{ for any }t \in \mathbb{R}.
$$
Clearly, $\gamma_w(\pm\infty)=+\infty$ and $\gamma_w$ is strictly convex in $\mathbb{R}$. Since $\gamma_w(0)<0$, there exist $T^-_w<0<T^+_w$
such that $\gamma_w(T^\pm_w)=0$. Hence, for any $w\neq0$ we have $\Gamma(T^\pm_ww\!-\!Y)=0$, that is, $T^\pm_ww\!-\!Y$ solves \eqref{degp};
therefore, \eqref{degp} admits infinitely many solutions.
The qualitative graph of the functional $\frac{c}{2}\Gamma^2$ is depicted in Figure \ref{gwp}; since $J_p-\frac{c}{2}\Gamma^2$
is convex (see again Figure \ref{gwp}), if $c$ is large then the behavior of the functional $J_p$ is not clear; in this situation, the uniqueness and/or
multiplicity for \eqref{melanp1} is an open problem.
\begin{figure}
\centering
\includegraphics[height=20mm, width=130mm]{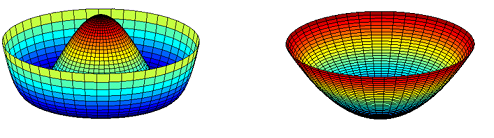}
\caption{Qualitative shape of the graphs of the functionals $w\mapsto \frac{c}{2}\Gamma(w)^2$ and $w\mapsto J_p(w)-\frac{c}{2}\Gamma(w)^2$.\label{gwp}}
\end{figure}

Second, we point out that the numerical results reported in the next section suggest that uniqueness holds also for $c\geq\alpha^{-2}$.

\section{Numerical results}

We consider \eqref{melanp1} in order to simulate the behavior of the real
three span suspension bridge already studied by Wollmann \cite{woll}. Using
his parameter values and his physical assumptions, we reduce our
computations to the main span which is assumed to be $460m$ long; then we have $L=460m$%
, $EI=57\times 10^{6}kN\cdot m^{2}$, $E_{c}A=36\times 10^{6}kN$, $q=170kN/m$%
, $H=97.75\times 10^{3}kN$, $\frac{q}{H}=1.739\times 10^{-3}m^{-1}$. More,
according to \cite{gjs}, we have $L_{c}=1.026\times
460 m=471.96m$. At last, we scale the length by $\gamma =\frac{1}{460}$ so
that the length becomes $L^{\ast}=1$: we call the new variable $s=\gamma x$ and we have $w(x)=z(\gamma x)=z(s)$. After some
computations we obtain
\begin{align}\label{numequ}
z''''(s)&-3.6289\times10^2\frac{z''(s)}{1+0.64(0.5-s)^{2}}-4.6442\times 10^{2}\frac{z'(s)(0.5-s)}{\left[
1+0.64(0.5-s)^{2}\right]^{2}}  \nonumber  \\
& -\frac{2.8318\times 10^{2}z''(s)-1.042\times 10^5}{(1+[2.1739\times
10^{-3}z'(s)+0.8(0.5-s)]^2)^{3/2}}\Gamma (z)=7.8555\times 10^{2}p(s),\qquad s\in
(0,1),
\end{align}%
where $\Gamma (z)=460\left[ \int_{0}^{1}\sqrt{1+\left[ 2.1739\times
10^{-3}z^{\prime }(s)+0.8(0.5-s)\right] ^{2}}ds-1.026\right]$ is the length increment of the cable, see \eqref{gamma}, and is measured in meters.

Assuming that a \textit{uniform live load} $p$ acts over the main span $L^{\ast}$, we solved \eqref{numequ} for many initial values of $\Gamma(z)$ by using the \textit{bvptwp} code, whose MATLAB version was published by Cash et al. \cite{ch}. It is an optimized high-quality code for the numerical solution of two-point
boundary value problems, which employs a mesh selection strategy based on
the estimation of the local error. In practice, a variable stepsize is
used in order to obtain a solution with a relative error less than the required
tolerance. We chose such tolerance $tol=10^{-6}$. It is worth noticing that
\textit{bvptwp} requires that the problem is posed as a first-order system.
For our computations, this is a great advantage, since it allows to have the
discrete first derivative $z'(s)$ with the same accuracy of the
solution $z(s)$. The main characteristics of the used code are that it implements high
order methods using a deferred correction strategy and often works
extremely efficiently on very difficult problems. Instead, the iterative
method presented by Dang-Luan \cite{dl} solves the boundary value problem
by a difference method of a second order convergence on uniform grid
and then estimates the derivative of solution by finite difference
approximations. We remark that a very good approximation of discrete
derivative values are required in order to compute $\Gamma(z)$ without
increasing the global error. The algorithm by Dang-Luan \cite{dl} does not
seem to hit this target.
\begin{figure}
\centering
\includegraphics[height=40mm, width=114mm]{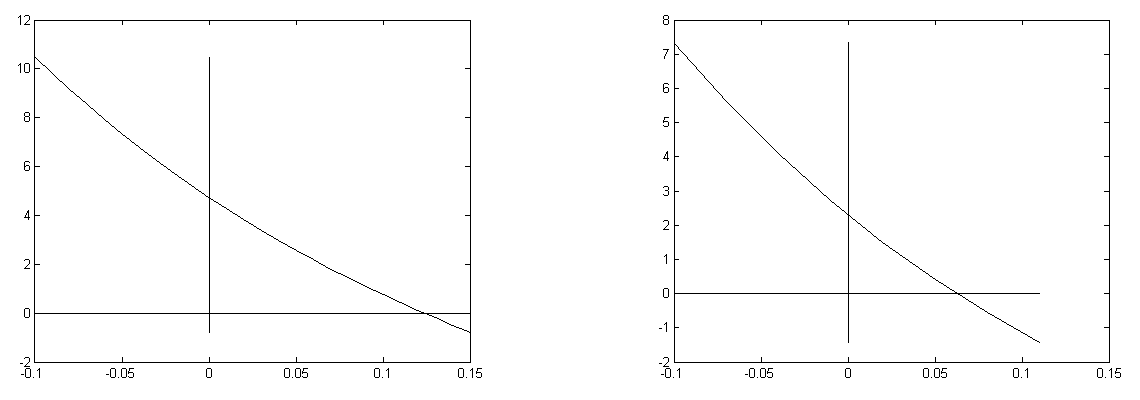}
\caption{The graph of the map $\Gamma_{in}\mapsto \Gamma_{out}$ with a uniform live load on the whole beam (left) and on the left half of the beam (right).\label{gwp1}}
\end{figure}

For each input $\Gamma _{in}$, an output $\Gamma _{out}$ is computed as follows: we solve \eqref{numequ} with $\Gamma (z)=\Gamma _{in}
$ and we find  $z_{out}$ and $z'_{out}$ on a discrete mesh with stepsize
$5\times 10^{-4}$, then we compute $\Gamma_{out}:=\Gamma (z_{out})$ by means of the composite trapezoidal rule, which is of the second
order, so we have an integration error which does not affect the global error. The map $\Gamma _{in}\mapsto \Gamma_{out}$ is plotted in Figure \ref{gwp1}
which shows that there exists a unique numerically unstable fixed point. This behavior remains the same for all the many values of $p$ we used.
Therefore we empirically conclude that a unique solution of \eqref{numequ} exists. Then we refined our computations and found value for which we have
$\Gamma _{in}\approx \Gamma _{out}$ that we consider the required fixed point $\Gamma_{fix}$. For instance, we found the fixed
points for different loads $p$ as shown in Table \ref{fix}.
\begin{table}
\centering
\begin{tabular}{|l|l|l|l|l|l|}
\cline{1-4} \cline{6-6}
\multicolumn{1}{|l|}{$p$ ${\scriptscriptstyle(kN/m)}$} & \multicolumn{1}{c|}{10} & \multicolumn{1}{c|}{30} & \multicolumn{1}{c|}{60} &  &\multicolumn{1}{c|}{30$\cdot\chi_{\{(0,230)\}}$}\\ \cline{1-4}\cline{6-6}
\multicolumn{1}{|l|}{$\Gamma_{fix}$ ${\scriptscriptstyle(m)}$} & \multicolumn{1}{c|}{0.039977} & \multicolumn{1}{c|}{0.11966} & \multicolumn{1}{c|}{0.23843} &  &\multicolumn{1}{c|}{0.060441}\\ \cline{1-4}\cline{6-6}
\multicolumn{1}{|l|}{$M$ ${\scriptscriptstyle(m)}$} & \multicolumn{1}{c|}{0.06887} & \multicolumn{1}{c|}{0.2014} & \multicolumn{1}{c|}{0.3924} &  &\multicolumn{1}{c|}{0.9843} \\ \cline{1-4}\cline{6-6}
\multicolumn{1}{|l|}{$G$ ${\scriptscriptstyle(m)}$} & \multicolumn{1}{c|}{0.02} & \multicolumn{1}{c|}{0.05} & \multicolumn{1}{c|}{0.08} & & \multicolumn{1}{c|}{1.81}\\ \cline{1-4}\cline{6-6}
\end{tabular}
\caption{The fixed points, maximal displacements and gaps under different loads $p$.}\label{fix}
\end{table}
It is worth noticing that since the problem appears numerically unstable, at
least five significant digits have to be used. In all our experiments the map $\Gamma _{in}\mapsto \Gamma_{out}$ turned out to be strictly decreasing with very negative slope. This suggests the uniqueness
of $\Gamma_{fix}$ and, in turn, the uniqueness of the solution of \eqref{numequ}.
\begin{figure}
\centering
\includegraphics[height=38mm, width=111mm]{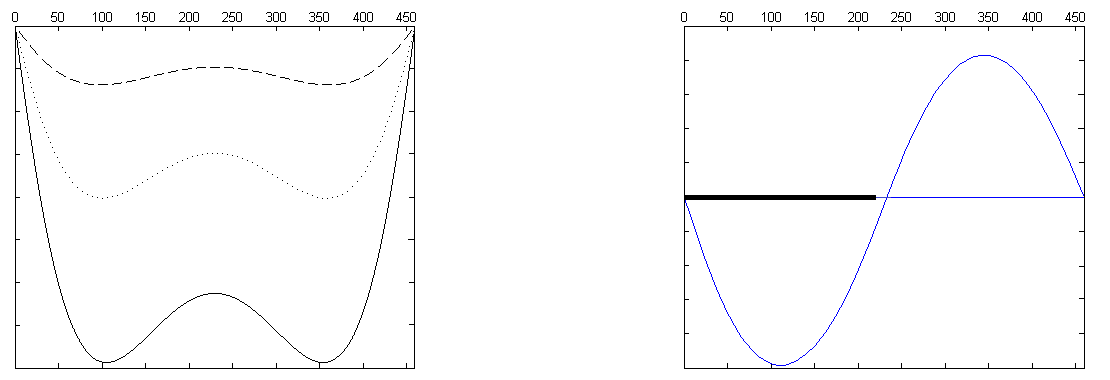}
\caption{Solutions of \eqref{numequ} with different loads $p$.\label{gwp2}}
\end{figure}

Using $\Gamma _{fix},$ we computed the numerical solutions of (\ref{numequ}). In the left picture in Figure \ref{gwp2}, using the original scale for
the $x$-axis, we report the solutions obtained for $p=10kN/m$ (dashed line), $p=30kN/m$ (dotted line), and $p=60kN/m$ (solid line).
We also computed the maximal displacements $M$ and the gap $G$ between the first maximum and the central relative minimum, see Table \ref{fix}.\par
Then we studied a live load having a weight density of $30kN/m$ and located on the left half of the span (e.g.\ a train of length $230m$).
In this case, the fixed point was computed to be $\Gamma_{fix}=0.060441m$ and we obtained the solution of \eqref{melanp1}, which is an ``almost" skew-symmetric
function, see the right one in Figure \ref{gwp2}.
Compared with the case where a uniform load $p=30 KN/m$ is on the whole beam, in the case where a uniform load $p=30 KN/m$ only acts on the left half of the beam, the shape of the cable changes (now it is not symmetrical with respect to $x=\frac{L}{2}$) and it results in a smaller increment length of the cable (see $\Gamma_{fix}$ in Table \ref{fix}). This produces a smaller upwards force (due to the extension of the cable) acting on the beam, and hence, the maximal displacement is larger than that under a uniform load $p$ on the whole beam, see Table \ref{fix}.\par
Overall, it is clear that the qualitative behavior of the solutions simulated by our nonlinear model \eqref{melanp1} is not close to those of
the linear model, which exhibits a unique maximum. Hence our results do not agree with the results reported by Semper \cite{se1}, referring to his nonlinear
model. Indeed our solutions show that the downwards deflection (assumed
downwards positive) presents two (equal) maxima and one relative minimum in between, instead of a unique maximum at
the middle span, as reported by Semper \cite{se1}. Indeed, Semper finds a small but not negligible discrepancy between the linear and the nonlinear equations, but for him this implies a quantitative difference only, whereas we find a significant qualitative difference also.\par
Finally, we considered the equation (\ref{numequ}) by dropping all the denominators and the third term (involving the first order derivative). The resulting
equations reads
$$
z''''(s)-3.6289\times10^2\cdot z''(s)
-\big(2.8318\times 10^{2}z''(s)-1.042\times 10^5\big)\Gamma (z)=7.8555\times 10^{2}p(s),\qquad s\in(0,1).
$$
The fixed point was $\Gamma_{fix}=0.212346$ instead of 0.11966 as for (\ref{numequ}) whereas the solution had a unique maximum computed equal to
$0.4175$. This striking difference gives strength to Remark \ref{re}, that is, one cannot neglect the curvature of the main cable.

\section{Proof of Theorem \ref{thm}}\label{4}

Since $y'$ is bounded and $a,b>0$, the scalar product \eqref{newsc} induces a norm on $H^2\cap H_0^1(0,L)$ denoted by
\begin{equation*}
\|u\|_{y}:=\left(a\int_0^L{(u'')^2}dx+b\int_0^L{\frac{(u')^2}{1+(y')^2}}dx\right)^{1/2}\quad \mbox{ for any }u\in H^2\cap H_0^1(0,L),
\end{equation*}
which is equivalent to $\|u''\|_2$, that is, the standard $H^2\cap H_0^1$-norm. For all $u\in H^2\cap H_0^1(0,L)$ we have
\begin{align}\label{ests}
\begin{array}{rcl}
\|u'\|_1^2 &\le& \int_0^L{(1+(y')^2)}dx\int_0^L{\frac{(u')^2}{1+(y')^2}}dx=\left[1+\frac{q^2L^2}{12H^2}\right]L\int_0^L{\frac{(u')^2}{1+(y')^2}}dx\le\alpha^2\|u\|_y^2\\
\|u'\|_2^2 &\le& \|1+(y')^2\|_{\infty}\int_0^L{\frac{(u')^2}{1+(y')^2}}dx=\left[1+\frac{q^2L^2}{4H^2}\right]\int_0^L{\frac{(u')^2}{1+(y')^2}}dx\le\beta^2\|u\|_y^2
\end{array}
\end{align}
with $\alpha, \beta>0$ as in \eqref{ab}. In addition, the simple inequality
\begin{equation}\label{inequ2}
\left|\sqrt{1+(\lambda+\mu)^2}-\sqrt{1+\mu^2}\right|\leq |\lambda|\qquad \forall \lambda, \mu\in \mathbb{R}
\end{equation}
implies that for any $u,v\in H_0^1\cap H^2(0,L)$
\begin{equation}\label{inequ3}
|\Gamma(u)|\leq \|u'\|_1,\qquad |\Gamma(u)-\Gamma(v)|\leq \|u'-v'\|_1.
\end{equation}

We first state the continuity and differentiability of the functional $\Gamma(w)^2$.

\begin{lemma}\label{gamma2}
Let $\Gamma(w)$ be as in \eqref{gamma}. Then $\Gamma(w)^2$ is weakly continuous and differentiable in $ H^2\cap H_0^1(0,L)$.
\end{lemma}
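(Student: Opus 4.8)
The plan is to prove the two assertions separately, reducing everything to the Lipschitz-type estimates on $\Gamma$ that are already recorded in \eqref{inequ3}. For weak continuity, let $w_n \rightharpoonup w$ in $H^2\cap H_0^1(0,L)$. The key observation is that the embedding $H^2\cap H_0^1(0,L) \hookrightarrow C^1([0,L])$ is compact, so from $w_n \rightharpoonup w$ I obtain strong convergence $w_n' \to w'$ in $C^0([0,L])$, and in particular $\|w_n' - w'\|_1 \to 0$. Applying the second inequality in \eqref{inequ3} with $u=w_n$, $v=w$ gives $|\Gamma(w_n)-\Gamma(w)| \le \|w_n' - w'\|_1 \to 0$, so $\Gamma(w_n)\to\Gamma(w)$; since $t\mapsto t^2$ is continuous, $\Gamma(w_n)^2 \to \Gamma(w)^2$. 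This establishes weak continuity of $\Gamma(w)^2$.

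For differentiability, I would first compute the Gateaux derivative of $\Gamma$ itself and then apply the chain rule to $\Gamma^2$. Fix $w$ and a direction $v\in H^2\cap H_0^1(0,L)$; differentiating under the integral sign in \eqref{gamma} the integrand $\sqrt{1+(w'+y')^2}$ with respect to the perturbation parameter suggests the candidate derivative
\begin{equation*}
\langle \Gamma'(w), v\rangle = \int_0^L \frac{(w'+y')\,v'}{\sqrt{1+(w'+y')^2}}\,dx,
\end{equation*}
which is exactly the nonlocal term appearing in the weak formulation \eqref{define}. To justify that this is the genuine Fr\'echet derivative, I would estimate the remainder $\Gamma(w+v)-\Gamma(w)-\langle\Gamma'(w),v\rangle$ using a second-order Taylor expansion of the scalar map $s\mapsto\sqrt{1+s^2}$, whose second derivative $(1+s^2)^{-3/2}$ is bounded by $1$; this yields a remainder controlled by $\int_0^L (v')^2\,dx = \|v'\|_2^2$, which by \eqref{ests} is $o(\|v\|_y)$ as $\|v\|_y\to 0$. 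Hence $\Gamma$ is Fr\'echet differentiable with the stated derivative, and $\Gamma'(w)$ is a bounded linear functional because the integrand is dominated by $|v'|$, so $|\langle\Gamma'(w),v\rangle|\le\|v'\|_1\le\alpha\|v\|_y$.

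Finally, differentiability of $\Gamma(w)^2$ follows from the product/chain rule: since $\Gamma$ is Fr\'echet differentiable and real-valued, $\Gamma^2$ is differentiable with $\langle(\Gamma^2)'(w),v\rangle = 2\Gamma(w)\langle\Gamma'(w),v\rangle$, matching the nonlocal term in \eqref{define} up to the constant. The main obstacle I anticipate is the differentiability part rather than the continuity part: one must rigorously control the remainder in the Taylor expansion uniformly enough to conclude Fr\'echet (not merely Gateaux) differentiability, and verify that the error term is genuinely superlinear in the $\|\cdot\|_y$-norm. The estimates \eqref{ests} and the uniform bound on $(1+s^2)^{-3/2}$ are precisely what make this step go through, so the proof reduces to carefully assembling these ingredients rather than to any deeper difficulty.
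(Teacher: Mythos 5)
Your proof is correct; the paper itself omits the argument as ``standard,'' and what you write is exactly the standard argument it has in mind, built from the compact embedding $H^2\cap H_0^1(0,L)\hookrightarrow C^1([0,L])$, the Lipschitz bounds \eqref{inequ2}--\eqref{inequ3}, the estimates \eqref{ests}, and the derivative formula $\langle\Gamma'(w),v\rangle=\int_0^L\frac{(w'+y')v'}{\sqrt{1+(w'+y')^2}}\,dx$ that the paper uses implicitly in \eqref{define} and in the proof of Lemma \ref{ps}. Nothing to correct.
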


The proof of Lemma \ref{gamma2} is standard and we omit it. The energy functional corresponding to  \eqref{melanp1} is
\[J_p=J_p(w)=\frac{1}{2}\|w\|_{y}^2+\frac{c}{2}\Gamma(w)^2-\langle p,w\rangle\qquad \mbox{ for any }w\in H^2\cap H_0^1(0,L).\]
According to Lemma \ref{gamma2}, weak solutions of \eqref{melanp1} are the critical points of the functional $J_p$. The next step is to prove the geometrical properties (coercivity) and compactness properties (Palais-Smale (PS) condition) of $J_p$.

\begin{lemma}\label{ps}
For any $p\in \mathcal{H}$, the functional $J_p$ is coercive and bounded below in $H^2\cap H_0^1(0,L)$. Moreover, it satisfies the (PS) condition.
\end{lemma}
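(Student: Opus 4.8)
The plan is to dispatch coercivity and boundedness below first, since they follow at once from the structure of $J_p$, and then to concentrate on the (PS) condition, which is the substantial part. For the first two properties, I observe that $\frac{c}{2}\Gamma(w)^2\ge0$ for every $w$, so this term may be discarded in a lower bound; since the duality pairing satisfies $|\langle p,w\rangle|\le\|p\|_{\mathcal{H}}\|w\|_y$, I obtain
\[
J_p(w)\ge\frac12\|w\|_y^2-\|p\|_{\mathcal{H}}\|w\|_y.
\]
The right-hand side is a quadratic polynomial in $\|w\|_y$ which tends to $+\infty$ as $\|w\|_y\to\infty$ and is bounded below by $-\frac12\|p\|_{\mathcal{H}}^2$; this simultaneously yields coercivity and boundedness below.

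For the (PS) condition, let $(w_n)\subset H^2\cap H_0^1(0,L)$ satisfy $J_p(w_n)$ bounded and $J_p'(w_n)\to0$ in $\mathcal{H}$. Coercivity forces $(w_n)$ to be bounded in the Hilbert space $H^2\cap H_0^1(0,L)$, so up to a subsequence $w_n\rightharpoonup w$ weakly. The compact embedding $H^2\cap H_0^1(0,L)\hookrightarrow C^1([0,L])$ then gives $w_n'\to w'$ uniformly on $[0,L]$, whence $\|w_n'-w'\|_1\to0$; combined with \eqref{inequ3} this yields $\Gamma(w_n)\to\Gamma(w)$, and in particular $\Gamma(w_n)$ is bounded.

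The crux is to upgrade weak convergence to strong convergence. I would test $J_p'(w_n)$ against $v=w_n-w$. Since $(w_n-w)$ is bounded in the $\|\cdot\|_y$-norm and $J_p'(w_n)\to0$ in $\mathcal{H}$, we have $\langle J_p'(w_n),w_n-w\rangle\to0$, that is,
\[
(w_n,w_n-w)_y+c\,\Gamma(w_n)\!\int_0^L\!\frac{(w_n'+y')(w_n'-w')}{\sqrt{1+(w_n'+y')^2}}\,dx-\langle p,w_n-w\rangle\to0.
\]
The linear term tends to $0$ by weak convergence. For the nonlocal term, the pointwise bound $\left|\frac{w_n'+y'}{\sqrt{1+(w_n'+y')^2}}\right|\le1$ gives
\[
\left|\int_0^L\frac{(w_n'+y')(w_n'-w')}{\sqrt{1+(w_n'+y')^2}}\,dx\right|\le\|w_n'-w'\|_1\to0,
\]
and since $\Gamma(w_n)$ is bounded the whole nonlocal term vanishes in the limit. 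Hence $(w_n,w_n-w)_y\to0$; as $(w,w_n-w)_y\to0$ by weak convergence, subtracting the two gives $\|w_n-w\|_y^2\to0$, i.e.\ $w_n\to w$ strongly, which is the (PS) condition.

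The main obstacle I anticipate is controlling the nonlocal nonlinear term so that it does not spoil the passage to strong convergence. This is resolved by the elementary observation that the integrand factor $\frac{w_n'+y'}{\sqrt{1+(w_n'+y')^2}}$ is uniformly bounded by $1$: this turns the only genuinely nonlinear contribution into a quantity dominated by the $L^1$-convergence of the derivatives, which is in turn supplied by the compactness of the Sobolev embedding. Everything else reduces to the standard quadratic Hilbert-space argument.
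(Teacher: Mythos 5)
Your proof is correct and follows essentially the same route as the paper: the identical coercivity estimate $J_p(w)\ge\frac12\|w\|_y^2-\|p\|_{\mathcal H}\|w\|_y$, boundedness of a (PS) sequence, weak convergence of a subsequence, and control of the nonlocal term via $|\Gamma(w_n)|\le\|w_n'\|_1$ together with the compact embedding. The only (harmless) difference is the final step: you test $J_p'(w_n)$ against $w_n-w$ and conclude $\|w_n-w\|_y^2\to0$ directly, whereas the paper compares $\langle J_p'(w_n),w_n\rangle$ with $\langle J_p'(\overline w),\overline w\rangle$ to obtain $\|w_n\|_y\to\|\overline w\|_y$ and then combines norm convergence with weak convergence.
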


\begin{proof} Since $p\in \mathcal{H}$ and $c>0$, we have for any $w\in H^2\cap H_0^1(0,L)$
\[J_p\geq \frac{1}{2}\|w\|_{y}^2-\langle p,w\rangle\geq \frac{1}{2}\|w\|_{y}^2-\|p\|_{\mathcal{H}}\|w\|_{y}\geq-\frac{\|p\|_{\mathcal{H}}^2}{2},\]
which implies that the functional $J_p$ is coercive and bounded below.

Consider now a sequence $\{w_n\}$ such that $J_p(w_n)$ is bounded and $J'_p(w_n)\to 0$ in $\mathcal{H}$. Then
\begin{align*}
\exists M>0, \qquad M\geq  \frac{1}{2}\|w_n\|_{y}^2+\frac{c}{2}\Gamma(w_n)^2-\langle p,w_n\rangle
\geq  \frac{1}{2}\|w_n\|_{y}^2-\|p\|_{\mathcal{H}}\|w_n\|_{y}.
\end{align*}
Hence, $\|w_n\|_{y}$ is bounded and there exists some $\overline{w}\in H^2\cap H_0^1(0,L)$ such that $w_n$ weakly converges to $\overline{w}$ in $H^2\cap H_0^1(0,L)$, up to a subsequence. Therefore, one has $\langle J'_p(w_n),v\rangle\to \langle J'_p(\overline{w}),v\rangle$ for all $v \in H^2\cap H_0^1(0,L)$, which proves that $J'_p(\overline{w})=0$. It follows that
\begin{align}\label{jprime}\begin{array}{rcl}
\langle J'_p(w_n),w_n\rangle &=& \|w_n\|_{y}^2+c\Gamma(w_n) \langle \Gamma'(w_n),w_n\rangle -\langle p,w_n\rangle\\
&\to& 0=\langle J'_p(\overline{w}), \overline{w}\rangle=\|\overline{w}\|_{y}^2+c\Gamma(\overline{w})\langle \Gamma'(\overline{w}),\overline{w}\rangle-
\langle p,\overline{w}\rangle.
\end{array}\end{align}
By \eqref{inequ3} and compact embedding we know that $\Gamma(w_n) \langle \Gamma'(w_n),w_n\rangle\to\Gamma(\overline{w})\langle \Gamma'(\overline{w}),\overline{w}\rangle$. Since $\langle p,w_n\rangle\to\langle p,\overline{w}\rangle$, by \eqref{jprime} we deduce that $\|w_n\|_{y}\to \|\overline{w}\|_{y}$. Together with the weak convergence $w_n\rightharpoonup \overline{w}$, this shows that
$w_n\to\overline{w}$  in $H^2\cap H_0^1(0,L)$.
This proves (PS) condition.
\end{proof}

By Lemma \ref{ps}, the functional $J_p$ admits a global minimum in $H^2\cap H_0^1(0,L)$ for any $p\in \mathcal{H}$. This
minimum point is a critical point for $J_p$ and hence a weak solution of \eqref{melanp1}. This proves the first
part of Theorem \ref{thm}.

We now discuss uniqueness. We first remark that if $w$ is a weak solution of \eqref{melanp1}, then by \eqref{define} we have
\begin{equation}\label{equ}
\|w\|_y^2+c\Gamma(w)\int_0^L{\frac{(w'+y')w'}{\sqrt{1+(w'+y')^2}}}dx=\langle p, w\rangle\leq \|p\|_{\mathcal{H}}\|w\|_y.
\end{equation}
By \eqref{ests} and \eqref{inequ3} we deduce that
\[\left|\Gamma(w)\right|\left|\int_0^L{\frac{(w'+y')w'}{\sqrt{1+(w'+y')^2}}}dx\right|\leq\|w'\|_1^2\leq \alpha^2\|w\|_y^2\]
so that, assuming \eqref{cc}, from \eqref{equ} we infer the following a priori bound for solutions of \eqref{melanp1}:
\begin{equation}\label{ss}
\|w\|_y\leq (1-c\alpha^2)^{-1}\|p\|_{\mathcal{H}}:=R_p.
\end{equation}

Next we fix $v\in H^2\cap H_0^1(0,L)$ and consider the linear problem
\begin{equation}\label{linp}
\begin{cases}
a w''''(x)-b \left(\frac{w'(x)}{1+(y'(x))^2}\right)'=c\frac{v''(x)-q/H}{(1+(v'(x)+y'(x))^2)^{3/2}}\Gamma(v)+p\quad & x\in (0,L)\\
w(0)=w(L)=w''(0)=w''(L)=0,\quad &
\end{cases}
\end{equation}
where $0<c<\alpha^{-2}$. Since $\frac{v''(x)-q/H}{(1+(v'(x)+y'(x))^2)^{3/2}}\Gamma(v)\in \mathcal{H}$ and $p\in \mathcal{H}$, there exists a unique solution $w\in H^2\cap H_0^1(0,L)$ of \eqref{linp} due to the Lax-Milgram theorem. We define the closed ball $B_p$ and the map $\Phi$ by
\[B_p:=\{w\in H^2\cap H_0^1(0,L); \|w\|_y\leq R_p\},\qquad\Phi: B_p\to H^2\cap H_0^1(0,L);\quad \Phi(v)=w,\]
 with $w$  being the unique solution of \eqref{linp}.

\begin{lemma}\label{map}
If \eqref{cc} and \eqref{pc} hold, then the map $\Phi$ satisfies $\Phi(B_p)\subseteq B_p$ and it is contractive in $B_p$.
\end{lemma}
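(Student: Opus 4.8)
The plan is to verify the two claims by exploiting the weak formulation of the linear problem \eqref{linp} together with the elementary estimates \eqref{ests} and \eqref{inequ3}. First I would record that, after one integration by parts and using $y''=-q/H$ together with the identity $\frac{d}{dx}\frac{s}{\sqrt{1+s^2}}=\frac{s'}{(1+s^2)^{3/2}}$, the unique solution $w=\Phi(v)$ of \eqref{linp} is characterized by
$$(w,\phi)_y+c\,\Gamma(v)\int_0^L\frac{(v'+y')\phi'}{\sqrt{1+(v'+y')^2}}\,dx=\langle p,\phi\rangle\qquad\text{for all }\phi\in H^2\cap H_0^1(0,L),$$
so that a fixed point of $\Phi$ is exactly a weak solution in the sense of \eqref{define}. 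The two pointwise facts I would use throughout are that $\big|\frac{s}{\sqrt{1+s^2}}\big|\le1$ and that $s\mapsto\frac{s}{\sqrt{1+s^2}}$ is $1$-Lipschitz on $\mathbb{R}$ (its derivative being $(1+s^2)^{-3/2}$).

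For the inclusion $\Phi(B_p)\subseteq B_p$, I would take $v\in B_p$, test the identity above with $\phi=w$, and bound the nonlocal term by $|\Gamma(v)|\,\|w'\|_1$ using $\big|\frac{v'+y'}{\sqrt{1+(v'+y')^2}}\big|\le1$. Then \eqref{inequ3} and \eqref{ests} give $|\Gamma(v)|\le\|v'\|_1\le\alpha\|v\|_y$ and $\|w'\|_1\le\alpha\|w\|_y$, while $\langle p,w\rangle\le\|p\|_{\mathcal{H}}\|w\|_y$. Dividing by $\|w\|_y$ yields $\|w\|_y\le c\alpha^2\|v\|_y+\|p\|_{\mathcal{H}}\le c\alpha^2R_p+\|p\|_{\mathcal{H}}$, and the definition $R_p=(1-c\alpha^2)^{-1}\|p\|_{\mathcal{H}}$ makes the right-hand side collapse to exactly $R_p$. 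Note that this step uses only \eqref{cc}; it is essentially the a priori bound \eqref{ss}.

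For the contraction, I would set $w_i=\Phi(v_i)$ for $v_1,v_2\in B_p$, subtract the two characterizing identities, and test with $\phi=w_1-w_2=:w$. The core algebraic move is the telescoping split
$$\Gamma(v_1)F_1(w)-\Gamma(v_2)F_2(w)=\Gamma(v_1)\big(F_1(w)-F_2(w)\big)+\big(\Gamma(v_1)-\Gamma(v_2)\big)F_2(w),$$
where $F_i(w):=\int_0^L\frac{(v_i'+y')w'}{\sqrt{1+(v_i'+y')^2}}\,dx$. For the first summand I would use the Lipschitz property to get $|F_1(w)-F_2(w)|\le\int_0^L|v_1'-v_2'|\,|w'|\,dx\le\beta^2\|v_1-v_2\|_y\|w\|_y$ by Cauchy-Schwarz and \eqref{ests}, together with $|\Gamma(v_1)|\le\alpha R_p$; for the second summand I would use the $L^1$-bounds $|\Gamma(v_1)-\Gamma(v_2)|\le\alpha\|v_1-v_2\|_y$ and $|F_2(w)|\le\|w'\|_1\le\alpha\|w\|_y$. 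Collecting the terms gives $\|w\|_y\le c(\alpha\beta^2R_p+\alpha^2)\|v_1-v_2\|_y$.

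It remains to check $\kappa:=c\alpha\beta^2R_p+c\alpha^2<1$, and this is where the whole argument is calibrated: substituting $R_p$ and rearranging, $\kappa<1$ is equivalent to $c\alpha\beta^2\|p\|_{\mathcal{H}}<(1-c\alpha^2)^2$, i.e.\ precisely the hypothesis \eqref{pc}. I expect the main obstacle to be not any single estimate but the bookkeeping that produces a constant matching \eqref{pc} exactly: one must use the $L^2$-based constant $\beta$ for the difference factor $F_1-F_2$ (where the Lipschitz modulus enters) and the cruder $L^1$-based constant $\alpha$ for the factors that are already small on $B_p$. Choosing the norms the other way would still give a contraction for small data, but with a suboptimal threshold that would not reproduce \eqref{pc}.
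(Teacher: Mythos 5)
Your proposal is correct and follows essentially the same route as the paper: testing the weak form of \eqref{linp} with $w=\Phi(v)$ (after the integration by parts turning $(v''-q/H)$ into the derivative of $(v'+y')/\sqrt{1+(v'+y')^2}$) for the invariance, and the identical telescoping split with the same $\alpha$/$\beta$ bookkeeping for the contraction constant $\rho=c\alpha(\alpha+\beta^2R_p)$. The verification that \eqref{pc} is exactly the condition $\rho<1$ also matches the paper.
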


\begin{proof} For any fixed $v\in B_p$, by testing \eqref{linp} with its solution $w=\Phi(v)$, we get
\begin{align*}
\|w\|_y^2&=c\Gamma(v)\int_0^L{\frac{(v''-q/H)w}{(1+(v'+y')^2)^{3/2}}}dx+\langle p,w\rangle\\
&\leq -c\Gamma(v)\int_0^L{\frac{(v'+y')w'}{\sqrt{1+(v'+y')^2}}}dx+\|p\|_{\mathcal{H}}\|w\|_y\\
\mbox{ by \eqref{ests}-\eqref{inequ3}}&\leq c\|v'\|_1\|w'\|_1+\|p\|_{\mathcal{H}}\|w\|_y
\leq \left(c\alpha^2\|v\|_y+\|p\|_{\mathcal{H}}\right)\|w\|_y\\
&\leq \left(c\alpha^2 R_p+\|p\|_{\mathcal{H}}\right)\|w\|_y=R_p\|w\|_y.
\end{align*}
Hence, $\|w\|_y\leq R$ which shows that $\Phi(B_p)\subseteq B_p$.

Note that the function $s\mapsto s/\sqrt{1+s^2}$ is globally Lipschitzian with constant 1, that is,
\begin{equation}\label{23}
\left|\frac{s_1}{\sqrt{1+s_1^2}}-\frac{s_2}{\sqrt{1+s_2^2}}\right|\leq |s_1-s_2|\qquad \forall s_1,s_2\in \mathbb{R}.
\end{equation}

Take $v_1,v_2\in B_p$ and let $w_1=\Phi(v_1)$, $w_2=\Phi(v_2)$, then we have for all $u\in H^2\cap H_0^1(0,L)$
\[(w_i,u)_y=c\Gamma(v_i)\int_0^L{\frac{(v''_i-q/H)u}{(1+(v'_i+y')^2)^{3/2}}}dx+\langle p,u\rangle\qquad i=1,2.\]
Put $u=w=w_1-w_2$, subtract these two equations  and recall \eqref{yprime}. Then, after integration by parts we get
\begin{align*}
\|w\|_y^2=&c\Gamma(v_2)\int_0^L{\frac{(v'_2+y')w'}{\sqrt{1+(v'_2+y')^2}}}dx-c\Gamma(v_1)\int_0^L{\frac{(v'_1+y')w'}{\sqrt{1+(v'_1+y')^2}}}dx\\
=&c\Gamma(v_2)\left[\int_0^L{\frac{(v'_2+y')w'}{\sqrt{1+(v'_2+y')^2}}}dx-\int_0^L{\frac{(v'_1+y')w'}{\sqrt{1+(v'_1+y')^2}}}dx\right]\\
&+c\left[\Gamma(v_2)-\Gamma(v_1)\right]\int_0^L{\frac{(v'_1+y')w'}{\sqrt{1+(v'_1+y')^2}}}dx\\
\mbox{ by \eqref{inequ3}-\eqref{23}  } &\leq c\|v'_2\|_1\int_0^L{|v'_1-v'_2| |w'|}dx+c\|v'_1-v'_2\|_1\|w'\|_1\\
\mbox{ by \eqref{ests}-\eqref{inequ2}  }&\leq  c \alpha \beta^2R_p\|v_1-v_2\|_y\|w\|_y+c \alpha^2\|v_1-v_2\|_y\|w\|_y.
\end{align*}

Hence, by the definition of $B_p$ in \eqref{ss}, we infer that
\[\|\Phi(v_1)-\Phi(v_2)\|_y=\|w\|_y\leq c \alpha\left(\alpha+ \beta^2 R_p\right)\|v_1-v_2\|_y= c
\alpha\left(\alpha+\frac{ \beta^2 \|p\|_{\mathcal{H}}}{1-c\alpha^2}\right) \|v_1-v_2\|_y:=\rho\|v_1-v_2\|_y.\]
Since the condition \eqref{pc} yields that $0<\rho<1$, this proves that $\Phi$ is contractive in $B_p$.
\end{proof}

Assume \eqref{cc} and \eqref{pc}. From \eqref{ss} we know that any solution of \eqref{melanp1} belongs to $B_p$. By Lemma \ref{map} and the Banach Contraction principle, $\Phi$ admits a unique fixed point in $B_p$, which solves \eqref{melanp1}. This completes the proof of Theorem \ref{thm}.

\section{Conclusion}

We considered a variational form of the Melan equation, see \eqref{melanp}. The novelty consists in taking into account the
shape of the cable and not replacing $y'(x)$ with 0, as erroneously done in \cite{karbio}. Indeed, von K\'arm\'an-Biot \cite[p.277]{karbio} warn the reader
by writing that {\em whereas the deflection of the beam may be considered small, the deflection of the string, i.e., the deviation of its shape from a straight
line, has to be considered as of finite magnitude}; then, they {\em neglect $y'(x)^2$ in comparison with unity}, see \cite[(5.14)]{karbio}.
We also maintained the nonlinearity given by the additional tension in the sustaining cable. This gives some difficulties in proving uniqueness of the
solution, see Theorem \ref{thm} and the comments that follow. Our numerical results suggest that one may have uniqueness for any $c>0$ but Figure \ref{gwp}
leaves some doubts. The numerical procedure turns out to be extremely unstable, see the plots in Figure \ref{gwp1}. The numerically found solutions exhibit a clear
nonlinear behavior of the equations and a strong dependence on the curvature of the cable. Hence, one cannot drop the nonlinearity nor approximate
$1+(y')^2\approx1$. We are confident that this paper might be the starting point for refined theoretical and numerical researches on the Melan equation.

\end{document}